\def\smalloverbrace#1{\mathop{\vbox{\m@th\ialign{##\crcr\noalign{\kern3\p@}%
				\tiny\downbracefill\crcr\noalign{\kern3\p@\nointerlineskip}%
				$\hfil\displaystyle{#1}\hfil$\crcr}}}\limits}
\numberwithin{equation}{section}
\theoremstyle{plain}
\newtheorem{theorem}{Теорема}[section]
\newtheorem{lemma}[theorem]{Лемма}
\newtheorem{proposition}[theorem]{Предложение}
\newtheorem{corollary}[theorem]{Следствие}
\theoremstyle{definition}
\newtheorem{definition}[theorem]{Определение}
\newtheorem{remark}[theorem]{Замечание}
\def \Coker {\operatorname{Coker}}
\newcommand{\Ab}{{\mathcal A}b}
\title{Еще раз об аналоге одной теоремы Воеводского}
\author{И. А. Панин, Д.Н. Тюрин}
\begin{document}

	\maketitle
\begin{abstract}
Пусть $F$ является $\mathbb{A}^{1}$-инвариантным квази-стабильным $\mathbb{Z}F_{\ast}$-предпучком. Тогда его пучковизация по Зарисскому $F_{Zar}$ совпадает с пучковизацией по Нисневичу $F_{Nis}$. Кроме того, для любой $k$-гладкой схемы $X\in Sm/k$ имеют место равенства $H^{n}_{Zar}(X, F_{Zar})=H^{n}_{Nis}(X,F_{Nis})$. 
\end{abstract}

\section{Введение}

Одним из основополагающих результатов статьи \textit{``Cohomological theory of presheaves with transfers''} В.А. Воеводского, 
на котором основано построение триангулированной категории мотивов Воеводского, является теорема, известная теперь как теорема Воеводского. 
В ней утверждается, что если поле $k$ совершенно, то для любого $\mathbb{A}^{1}$-инвариантного предпучка с трансферами $F$ на категории $Sm/k$ выполняются следующие утверждения:
\begin{itemize}
	\item
	\cite[Предложение 5.5]{V1} пучок $F_{Zar}$ совпадает с пучком $F_{Nis}$ и имеет естественную структуру $\mathbb{A}^{1}$-инвариантного предпучка с трансферами;
	\item
	\cite[Теорема 5.6]{V1} предпучки $X\mapsto H^{n}_{Zar}(X,F_{Zar})=H^{n}_{Nis}(-,F_{Nis})$ гомотопически инвариантны (и имеют естественную структуру предпучков с трансферами);
	\item
	\cite[Теорема 5.7]{V1} для любого $X\in Sm/k$ и для любого $n\ge 0$ имеет место равенство $H^{n}_{Zar}(X,F_{Zar})=H^{n}_{Nis}(-,F_{Nis})$

\end{itemize}
(См. также~\cite[Теорема 3.1.12]{V2}). Неформально можно сказать, что $\mathbb{A}^{1}$-инвариантные пучки Нисневича с трансферами играют в  
триангулированной категории мотивов Воеводского роль своеобразных элементарных ``кирпичиков'', на которые развинчивается любой обьект категории. Позднее, в  в своих основополагающих заметках  ``Notes on framed correspondences'' \cite{V3}. Воеводский ввел понятие предпучков (множеств) с оснащенными трансферами (фрэйм трансферами), а также предположил, что для стабильной мотивной гомотопической категории $SH(k)$ роль, аналогичную вышеописанной, играют аддитивные квази-стабильные $\mathbb{A}^{1}$-инвариантные пучки Нисневича абелевых групп с оснащенными трансферами. Правильность этого предположения в числе прочего была доказана Гаркушей и Паниным в статье ``Framed motives of algebraic varieties (after V. Voevodsky)'' \cite{GP2}. 
Для этого они доказали в своей в фундаментальной статье \textit{``Homotopy invariant presheaves with framed transfers''} \cite{GP1} почти полный аналог сформулированной выше теоремы Воеводского (вынеся за скобки топологию Зариского). При этом была введена категория
 $\mathbb{Z}F_{\ast}$-линейных оснащенных соответствий и показано, что аддитивные (пред)пучки Нисневича абелевых групп с оснащенными трансферами совпадают с $\mathbb{Z}F_{\ast}$-предпучками абелевых групп. Тем самым аддитивные $\mathbb{A}^{1}$-инвариантные
квази-стабильные (пред)пучки Нисневича абелевых групп с оснащенными трансферами можно рассматривать как $\mathbb{A}^{1}$-инвариантные,
квази-стабильные $\mathbb{Z}F_{\ast}$-(пред)пучки абелевых групп.

Основная теорема~\cite{GP1} утверждает, что для любого $\mathbb{A}^{1}$-инвариантного, квази-стабильного $\mathbb{Z}F_{\ast}$-предпучка абелевых групп $F$ на $Sm/k$ 
ассоциированный с ним пучок Нисневича $F_{Nis}$ и все предпучки его когомологий  $H^{n}_{Nis}(-,F_{Nis})$ снабжены канонической структурой $\mathbb{Z}F_{\ast}$-предпучков, 
а кроме того --- также являются $\mathbb{A}^{1}$-инвариантными и квази-стабильными (подробнее см. \cite[Теорема 1.1, Следствие 2.17]{GP1}). 
Напомним, что эта теорема доказана ими в предположении, что характеристика $k$ не равна 2. В характеристике 2 указанный результат доказан
в статье \cite{DP}.

Цель настоящей статьи - вспомнить о топологии Зариского, чтобы сделать основную теорему Гаркуши--Панина полным аналогом теоремы Воеводского. 
А именно, мы докажем что, если $F$ является $\mathbb{A}^{1}$-инвариантным  квази-стабильным $\mathbb{Z}F_{\ast}$-предпучком абелевых групп, то:
\begin{itemize}
\item пучок $F_{Zar}$ совпадает с пучком $F_{Nis}$ (в частности, оба они являются $\mathbb{A}^{1}$-инвариантными
квази-стабильными $\mathbb{Z}F_{\ast}$-пучками абелевых групп);
\item для любого $X\in Sm/k$ и для любого $n\ge 0$ имеет место изоморфизм $H^{n}_{Zar}(X,F_{Zar})\cong H^{n}_{Nis}(X,F_{Nis})$.
\end{itemize}
Отметим, что, поскольку по основной теореме Гаркуши--Панина, предпучки вида $X\mapsto H^{n}_{Nis}(X,F_{Nis})$ являются $\mathbb{A}^{1}$-инвариантными
квази-стабильными $\mathbb{Z}F_{\ast}$-предпучками абелевых групп, то также  имеет место следствие:
\begin{itemize} 
\item (предпучки вида $X\mapsto H^{n}_{Zar}(X,F_{Zar})$ являются $\mathbb{A}^{1}$-инвариантными
квази-стабильными $\mathbb{Z}F_{\ast}$-предпучками абелевых групп
\end{itemize}


Настоящая статья является прямым продолжением работы \textit{``Of a certain analogue of Voevodsky theorem''} ~\cite{D}, так же посвященной аналогу теоремы Воеводского для случая $\mathbb{Z}F_{\ast}$-предпучков. В частности, для доказательства нам потребуется утверждение о том, что пучок $F_{Nis}$ обладает на $X$ вялой резольвентой (Герстена):
$$
0\to F_{Nis}\to\underset{x\in X^{(0)}}{\bigoplus}(i_{x})_{\ast}(F_{Nis})\to\dots\to\underset{x\in X^{(m)}}{\bigoplus}(i_{x})_{\ast}((F_{Nis})_{-m})\to 0\,,
$$
доказанное в ~\cite[Теорема 6.4]{D}.

Исследование финансировалось в рамках Программы фундаментальных исследований НИУ ВШЭ.

\section{Определения и вспомогательные результаты}

Пусть $k$ --- совершенное поле. Через $Sm/k$ мы будем обозначать категорию $k$-гладких схем конечного типа.  Через $Sm'/k$ мы будем обозначать категорию существенно $k$-гладких схем. Если $G$ является предпучком абелевых групп на $Sm/k$, мы, для краткости, будем обозначать тем же символом его прямой образ относительно естественного вложения категорий $Sm/k\hookrightarrow Sm'/k$.

В первую очередь мы напомним некоторые определения связанные с категорией  $\mathbb{Z}F_{\ast}$-предпучков на $Sm/k$:

\begin{itemize}
	\item 
	 Через $\mathbb{Z}F_{\ast}$ мы обозначаем аддитивную категорию, объекты которой --- это в точности объекты категории $Sm/k$, а морфизмы имеют вид
	$$
	Hom_{\mathbb{Z}F_{\ast}}(Y,X)=\underset{n\geqslant0}{\bigoplus}\mathbb{Z}F_{n}(Y,X)\,.
	$$
	Через $\mathbb{Z}F_{n}(Y,X)$ здесь обозначается фактор-группа свободной абалевой группы $\mathbb{Z}[Fr_{n}(Y,X)]$, порожденной всеми фрейм-соответствиями уровня $n$ из $Y$ в $X$, по подгруппе, порожденной всеми элементами вида
	$$
	(Z\sqcup Z',V,\varphi,g)-(Z,V\setminus Z',\varphi|_{V\setminus Z'},g|_{V\setminus Z'})-(Z',V\setminus Z,\varphi|_{V\setminus Z},g|_{V\setminus Z})
	$$
	(подробнее смотри в [ссылка на статью]);
	\item
	$\mathbb{Z}F_{\ast}$-предпучком абелевых групп называется аддитивный контрвариантный функтор из $\mathbb{Z}F_{\ast}$ в категорию $Ab$ абелевых групп. $\mathbb{Z}F_{\ast}$-предпучок называется $\mathbb{Z}F_{\ast}$-пучком (в соответствующей топологии), если он является пучком относительно $Sm/k$ (в соответствующей топологии);
	\item 
    $\mathbb{Z}F_{\ast}$-предпучок называется $\mathbb{A}^{1}$-инвариантным, если для любого $X\in Sm/k$ проекция $X\times\mathbb{A}^{1}\to X$ индуцирует изоморфизм $F(X)\to F(X\times\mathbb{A}^{1})$;
	\item
    $\mathbb{Z}F_{\ast}$-предпучок называется квази-стабильным, если для любого $X\in Sm/k$ фрейм-соответствие $\sigma_{X}:=(X\times 0,X\times\mathbb{A}^{1},t,pr_{X})\in Fr_{1}(X,X)$ индуцирует изоморфизм $\sigma^{\ast}_{X}:F(X)\to F(X)$. Отметим, что для любого $\psi\in\mathbb{Z}F_{\ast}(X,Y)$ имеет место равенство
    $\psi\circ\sigma_{X}=\sigma_{Y}\circ\psi$.
\end{itemize}

Мы также будем говорить, что $\mathbb{Z}F_{\ast}$-пучок $\mathbb{A}^{1}$-инвариантен (квази-стабилен) если он $\mathbb{A}^{1}$-инвариантен (квази-стабилен) как $\mathbb{Z}F_{\ast}$-предпучок.

Для дальнейшего удобства введем следующее обозначение:
\begin{definition}
Будем говорить, что $\mathbb{Z}F_{\ast}$-предпучок $F$ удовлетворяет свойству $(\ast)$ (является $(\ast)$-предпучком), если он является $\mathbb{A}^{1}$-инвариантным и квази-стабильным.
\end{definition}

Одним из основных результатов, на которые мы будем опираться в наших рассуждениях является следующая теорема:

\begin{theorem}\label{the:PGmain}
	~\cite[Лемма 4.5]{V3}, \cite[Теорема 1.1, Следствие 2.17]{GP1} Для любого $(\ast)$-предпучка $F$ на соответствующей пучковизации по Нисневичу $F_{Nis}$ существует единственная структура $\mathbb{Z}F_{\ast}$-предпучка согласованная с естественным морфизмом $F\to F_{Nis}$. Кроме того $F_{Nis}$, а также все соотвествующие предпучки когомологий $X\to H^{n}_{Nis}(X,F_{Nis})$ тоже являются $(\ast)$-предпучками.
\end{theorem}

Приведем также несколько вспомогательных утверждений: 

\begin{theorem}\label{the:PGinj} \cite[Теорема 3.15(3)]{GP1} Пусть $X$ -- неприводимое гладкое многообразие над $k$, $x\in X$ и $U:=Spec(\mathcal{O}_{X,x})$.Тогда для любого $(\ast)$-предпучка $F$ гомоморфизм $F(U)\to F(Spec(k(X))$ индуцированный каноническим морфизмом $Spec(k(X))\to U$ является иньективным.
\end{theorem}

\begin{corollary}\label{cor:inj}
	Пусть $X$ -- неприводимая гладкая $k$-схема, $x\in X$ и $U:=Spec(\mathcal{O}_{X,x})$.Тогда для любого $(\ast)$-предпучка $F$ естественный гомоморфизм $F(U)\to F_{Nis}(U)$ является иньективным.
\end{corollary}
\begin{proof} Рассмотрим коммутативную диаграмму
	$$
	\begin{CD}\label{com:1}
		F(U) @>>> F(Spec(k(X)))\\
		@VVV @VVV\\
		F_{Nis}(U) @>>> F_{Nis}(Spec(k(X)))
	\end{CD}
	$$
	Утверждение следует из того, что стрелка $F(U)\to F(Spec(k(X)))$ иньективна согласно теореме~\ref{the:PGinj}, и что морфизм  $F(Spec(k(X)))\to F_{Nis}(Spec(k(X)))$ является тождественным.
	\end{proof}
	
	Пусть теперь $X\in Sm'/k$, $Y\in Sm/k$. Через $\overline{\mathbb{Z}F}_{\ast}(X,Y)$ мы будем обозначать группу
	$$
	\Coker\Big[\mathbb{Z}F_{\ast}(\mathbb{A}^{1}\times X,Y)\xrightarrow{i^{\ast}_{0}-i^{\ast}_{1}}\mathbb{Z}F_{\ast}(X,Y)\Big]\,,
	$$

где через $i_{0,1}$ обозначаются, соотвественно, вложения $\{0\},\{1\}\hookrightarrow\mathbb{A}^{1}$. Соответствующий класс фрейм-соответствия $\phi\in \mathbb{Z}F_{\ast}(X,Y)$ мы будем обозначать через $[\phi]$.

Следующая геометрическая лемма будет играть ключевую роль в нашем доказательстве.	 
\begin{lemma}\label{lemm:geom} \cite[Утверждение 9.9]{GP1}
	Пусть $X$ -- неприводимое гладкое многообразие над $k$, $x\in X$ и $U:=Spec(\mathcal{O}_{X,x})$. Обозначим естественное вложение $U\hookrightarrow X$ через $can$. Пусть также $D\subset X$ --- замкнутое собственное подмножество в $X$ и $j:X-D\hookrightarrow X$ --- соответствующее открытое вложение. Тогда существует такое натуральное число $N$ и такое $\phi\in \mathbb{Z}F_{N}(U,X-D)$, что в $\overline{\mathbb{Z}F}_{Т}(U,X)$ имеет место равенство
\begin{equation}\label{eq:geom}
	[j]\circ[\phi]=[\sigma^{N}_{X}]\circ[can]\,.
\end{equation}	

	\end{lemma}

Мы закончим этот параграф формулировкой основного результата нашей статьи:
\begin{theorem}\label{the:main}
	Пусть $F$ -- $(\ast)$-предпучок. Тогда на категории $Sm/k$ имеет место равенство пучков Зарисского $F_{Nis}=F_{Zar}$. Более того, для любой $k$-гладкой схемы $X\in Sm/k$ и для любого $n>0$ имеет место равенство $H^{n}_{Zar}(X,F_{Zar})=H^{n}_{Nis}(X,F_{Nis})$.
\end{theorem}	

В частности, с учетом теоремы~\ref{the:PGmain}, отсюда сразу же следует, что пучок $F_{Zar}$ и все предпучки его когомологий $H^{n}(-,F_{Zar})$ являются $(\ast)$-предпучками.

\section{Случай $F_{Zar}$}

Рассмотрим естественный гомоморфизм пучков по Зарисскому $F_{Zar}\to F_{Nis}$. Для доказательства того, что это изоморфизм нам достаточно проверить его на ростках вида  $U:=Spec(\mathcal{O}_{X,x})$, где $X$ --- неприводимая $k$-гладкая схема и $x$ --- некоторая точка в $X$.
\begin{proposition}\label{prop:main}
	Пусть $F$ --- $(\ast)$-предпучок, $a:F\to F_{Nis}$ --- естественный гомоморфизм пучкования, $X\in Sm/k$ --- неприводимая гладкая $k$-схема и $x\in X$. Обозначим $Spec(\mathcal{O}_{X,x})$ через $U$. Тогда соответствующий гомоморфизм $a_{U}:F(U)=F_{Zar}(U)\to F_{Nis}(U)$ является изоморфизмом. 
\end{proposition}

\begin{remark}
	В дальнейшем, ради удобства, для любого фрейм-соответствия $\theta$ лежащего в $\mathbb{Z}F_{\ast}(X,Y)$ мы будем обозначать соответствующий морфизм обратного образа $\theta^{\ast}:F(Y)\to F(X)$ для предпучка $F$ и $\widetilde{\theta}^{\ast}:F_{Nis}(Y)\to F_{Nis}(X)$ для пучка $F_{Nis}$. В частности, из того, что $a$ является морфизмом $\mathbb{Z}F_{\ast}$-предпучков по теореме~\ref{the:PGmain}, следует, что соответствующая диаграмма
	$$
	\begin{CD}\label{com:1}
		F(Y) @>\theta^{\ast}>> F(X)\\
		@Va_{Y}VV @Va_{X}VV\\
		F_{Nis}(Y) @>\widetilde{\theta}^{\ast}>> F_{Nis}(X)
	\end{CD}
	$$
	является коммутативной.
\end{remark}
	
Отдельно рассмотрим случай, $\theta=\eta_{Y}$, где $\eta_{Y}:Spec(k(Y))\to Y$ --- вложение общей точки в $Y$. Так как  $a_{k(Y)}$ тождественно,  в $F(Spec(k(Y)))=F_{Nis}(Spec(k(Y)))$ имеет место формула
\begin{equation}\label{eq:eq1}
	\eta^{\ast}_{Y}=\widetilde{\eta}^{\ast}_{Y}\circ a_{Y}\,.
\end{equation}	

Перейдем теперь к доказательству предложения. С учетом следствия~\ref{cor:inj} достаточно показать, что $a_{U}$ сюрьективен. Пусть $\alpha\in F_{Nis}(U)$. Обрезав, по необходимости $X$, с сохранением точки $x$ мы можем считать, что существует глобальное сечение $\widehat{\alpha}\in F_{Nis}(X)$, такое, что $\widetilde{can}^{\ast}(\widehat{\alpha})=\alpha$ . Более того, так как 
$F(k(X))=F_{Nis}(k(X))$, существует такое замкнутое подмножество $D\subset X$ и такой элемент $\widehat{\alpha}'\in F(X-D)$, что $\widehat{\alpha}|_{k(X)}=\widehat{\alpha}'|_{k(X)}$. Другими словами, для коммутативной диаграммы
\begin{equation}\label{eq:com1}
\begin{tikzcd}
	U \arrow{r}{can}  & X   & \arrow{l}[swap]{j}  X-D \\	
	&    Spec(k(X)) \arrow{ul}{\eta_{U}} \arrow{u}{\eta_{X}} \arrow{ur}[swap]{\eta_{X-D}}     &
\end{tikzcd}
\end{equation}
имеет место равенство
\begin{equation}\label{eq:eq2}
	\widetilde{\eta}^{\ast}_{U}(\widetilde{can}^{\ast}(\widehat{\alpha}))=\eta^{\ast}_{X-D}(\widehat{\alpha}')\,.
	\end{equation}
	
	\begin{lemma}\label{lemm:key}
		В $F_{Nis}(X-D)$ имеет место равенство
		\begin{equation}\label{eq:eq3}
		a_{X-D}(\widehat{\alpha}')=\widetilde{j}^{\ast}(\widehat{\alpha})\,.
			\end{equation}
	\end{lemma}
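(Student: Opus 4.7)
My plan is to reduce (\ref{eq:eq3}) to an identity over the generic point $Spec(k(X))$, where it is immediate from the choice of $\widehat{\alpha}'$, and then to lift this equality back to $X-D$ by an injectivity argument applied to the sheafification $F_{Nis}$ itself.

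For the first reduction, I would restrict both sides of (\ref{eq:eq3}) along the generic point morphism $\eta_{X-D}:Spec(k(X))\to X-D$. For the left-hand side, the naturality square for $a$ (the commutative diagram recalled in the preceding remark, applied with $\theta=\eta_{X-D}$) gives
$$
\widetilde{\eta}^{\ast}_{X-D}(a_{X-D}(\widehat{\alpha}'))=a_{k(X)}(\eta^{\ast}_{X-D}(\widehat{\alpha}')).
$$
Since $a_{k(X)}$ is an isomorphism (the same fact already used to derive (\ref{eq:eq1})), the right-hand side coincides with $\widehat{\alpha}'|_{k(X)}$ under the identification $F(k(X))=F_{Nis}(k(X))$. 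For the right-hand side of (\ref{eq:eq3}), the commutativity of diagram~(\ref{eq:com1}) together with functoriality of $F_{Nis}$ yields $\widetilde{\eta}^{\ast}_{X-D}(\widetilde{j}^{\ast}(\widehat{\alpha}))=\widetilde{\eta}^{\ast}_X(\widehat{\alpha})=\widehat{\alpha}|_{k(X)}$. By the choice of $\widehat{\alpha}'$ we have $\widehat{\alpha}|_{k(X)}=\widehat{\alpha}'|_{k(X)}$, so the two sides of (\ref{eq:eq3}) have equal images in $F_{Nis}(Spec(k(X)))$.

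For the lifting step, I need injectivity of the restriction $F_{Nis}(X-D)\to F_{Nis}(Spec(k(X)))$. Here the key point is that, by Theorem~\ref{the:PGmain}, the presheaf $F_{Nis}$ is itself a $(\ast)$-presheaf, so Theorem~\ref{the:PGinj} applies to it: for every point $y\in X-D$ the map $F_{Nis}(Spec(\mathcal{O}_{X-D,y}))\to F_{Nis}(Spec(k(X)))$ is injective. Since $F_{Nis}$ is in particular a Zariski sheaf, a section over $X-D$ is determined by its germs at points, and hence by its restrictions to the Zariski localizations $Spec(\mathcal{O}_{X-D,y})$. Combining this separation property with the pointwise injectivities yields injectivity of $F_{Nis}(X-D)\to F_{Nis}(Spec(k(X)))$, and the lemma follows.

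The main obstacle is conceptual rather than computational: one must be careful to invoke Theorem~\ref{the:PGinj} for the sheafification $F_{Nis}$ and not merely for the original $F$, which is legitimate only because Theorem~\ref{the:PGmain} guarantees that the $(\ast)$-structure is inherited by the Nisnevich sheafification. Once this is granted, the proof is a routine naturality chase combined with the sheaf axiom.
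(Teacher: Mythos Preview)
Your proof is correct and follows essentially the same approach as the paper: both arguments restrict the two sides of (\ref{eq:eq3}) along $\eta_{X-D}$, use naturality of $a$ together with the commutativity of diagram~(\ref{eq:com1}) and the defining relation $\widehat{\alpha}|_{k(X)}=\widehat{\alpha}'|_{k(X)}$ to see that they agree on $Spec(k(X))$, and then conclude by injectivity of $\widetilde{\eta}^{\ast}_{X-D}$ obtained by applying Theorem~\ref{the:PGinj} to the $(\ast)$-presheaf $F_{Nis}$ at each point of $X-D$ and using that $F_{Nis}$ is a Zariski sheaf. Your explicit remark that the crucial step is invoking Theorem~\ref{the:PGinj} for $F_{Nis}$ rather than for $F$, which is legitimate only via Theorem~\ref{the:PGmain}, matches precisely the point emphasized in the paper's proof.
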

	\begin{proof}	
	В силу коммутативности диаграммы~\ref{eq:com1} и формулы~\ref{eq:eq2} получаем
	$$
	\eta^{\ast}_{X-D}(\widehat{\alpha}')=\widetilde{\eta}^{\ast}_{U}(\widetilde{can}^{\ast}(\widehat{\alpha}))=\widetilde{\eta}^{\ast}_{X-D}(\widetilde{j}^{\ast}(\widehat{\alpha}))
	$$
	
	В то же время из формулы~\ref{eq:eq1} для случая $Y=X-D$ следует, что 
	$$
	\eta^{\ast}_{X-D}(\widehat{\alpha}')=\widetilde{\eta}^{\ast}_{X-D}(a_{X-D}(\widehat{\alpha}'))\,.
	$$
	
	Таким образом, $a_{X-D}(\widehat{\alpha}')$ и $\widetilde{j}^{\ast}(\widehat{\alpha})$ совпадают по модулю отображения 
	$$
	\widetilde{\eta}^{\ast}_{X-D}:F_{Nis}(X-D)\longrightarrow F_{Nis}(Spec(k(X)))=F(Spec(k(X)))\,.
	$$
	
	Покажем, что это отображение на самом деле иньективно. Действительно, так как $F_{Nis}$ является также пучком по Зарисскому, любой его элемент представляется в виде согласованного набора сечений из $\{F_{Nis}(Spec(\mathcal{O}_{X-D,y}))\}_{y\in X-D}$. Но так как по теореме~\ref{the:PGmain} пучок $F_{Nis}$ также остается $(\ast)$-предпучком, то для любого $y\in X-D$ соответствующее естественное отображение 
	$$
	F_{Nis}(Spec(\mathcal{O}_{X-D,y}))\to F_{Nis}(Spec(k(X)))
	$$
	является иньективным по теореме~\ref{the:PGinj}. Следовательно, иньективным является и само отображение $\widetilde{\eta}^{\ast}_{X-D}$. Лемма доказана.
		\end{proof}
		
	Вернемся к доказательству предложения. Пусть $\phi$ --- элемент из $\mathbb{Z}F_{N}(U,X-D)$, удовлетворяющий условиям леммы~\ref{lemm:geom}. Так как $F$ и $F_{Nis}$ являются $(\ast)$-предпучками мы можем считать, что выполняется равенство $\phi^{\ast}\circ j^{\ast}=can^{\ast}$ (соответственно, $\widetilde{\phi}^{\ast}\circ \widetilde{j}^{\ast}=\widetilde{can}^{\ast}$). Теперь достаточно доказать, что в  $F_{Nis}(Spec(k(X))=F(Spec(k(X))$ имеет место формула

\begin{equation}\label{eq:eq4}
\eta^{\ast}_{U}(\phi^{\ast}(\widehat{\alpha}'))=\eta^{\ast}_{X-D}(\widehat{\alpha}')\,.
\end{equation}
Действительно, с учетом формул~\ref{eq:eq1}, \ref{eq:eq2} и~\ref{eq:eq4} мы получаем равенства 
$$
\widetilde{\eta}^{\ast}_{U}(\widetilde{can}^{\ast}(\widehat{\alpha}))=\eta^{\ast}_{U}(\phi^{\ast}(\widehat{\alpha}'))=\widetilde{\eta}^{\ast}_{U}( a_{U}(\phi^{\ast}(\widehat{\alpha}'))
$$
Но в силу того, что $F_{Nis}$ остается $(\ast)$-предпучком, по теореме~\ref{the:PGinj} морфизм
$$
\widetilde{\eta}^{\ast}_{U}:F_{Nis}(U)\longrightarrow F_{Nis}(Spec(k(X)))=F(Spec(k(X)))
$$
является иньективным. Таким образом, $\alpha=\widetilde{can}^{\ast}(\widehat{\alpha})$ будет совпадать с образом $\phi^{\ast}(\widehat{\alpha}')$ относительно $a_{U}$.

Докажем формулу~\ref{eq:eq4}. Поскольку гомоморфизм $a_{k(X)}$ тождественнен, достаточно проверить равенство
$$
a_{k(X)}(\eta^{\ast}_{U}(\phi^{\ast}(\widehat{\alpha}')))=a_{k(X)}(\eta^{\ast}_{X-D}(\widehat{\alpha}'))\,.
$$

Т.к. $a$ --- гомоморфизм предпучков с трансферами, мы получаем эквивалентное равенство	
$$
\widetilde{\eta}^{\ast}_{U}(\widetilde{\phi}^{\ast}(a_{X-D}(\widehat{\alpha}')))=\widetilde{\eta}^{\ast}_{X-D}(a_{X-D}(\widehat{\alpha}'))\,.
$$
Подставляя $\widetilde{j}^{\ast}(\widehat{\alpha})$ вместо $a_{X-D}(\widehat{\alpha}')$ согласно формуле~\ref{eq:eq3}, получаем
$$
\widetilde{\eta}^{\ast}_{U}(\widetilde{\phi}^{\ast}(\widetilde{j}^{\ast}(\widehat{\alpha}))=\widetilde{\eta}^{\ast}_{X-D}(\widetilde{j}^{\ast}(\widehat{\alpha}))\,.
$$
Из коммутативности диаграммы~\ref{eq:com1} следует что $\widetilde{\eta}^{\ast}_{X-D}(\widetilde{j}^{\ast}(\widehat{\alpha}))=\widetilde{\eta}^{\ast}_{U}(\widetilde{can}^{\ast}(\widehat{\alpha})))$. Так как по условию $\widetilde{\phi}^{\ast}\circ \widetilde{j}^{\ast}$ совпадает с $\widetilde{can}^{\ast}$, равенство~\ref{eq:eq4}, а следовательно и предложение~\ref{prop:main} доказаны.

	\section{Случай когомологий}
 Рассмотрим контрвариантный функтор:	
	$$
	H^{\bullet}: SmOp/k\to Gr\Ab\,,\,\, (X,X-Z)\mapsto \underset{n\in\mathbb{N}}{\bigoplus}(H^{n}_{Nis})_{Z}(X,F_{Nis})\,.
	$$

Так как по теореме~\ref{the:PGmain} все предпучки когомологий вида	$H^{n}_{Nis}(-,F_{Nis})$ являются $\mathbb{A}^{1}$-инвариантными, этот функтор является теорией когомологий в смысле Панина--Смирнова (см.~\cite{PS}). Поэтому в соответствии с ~\cite[Следствие 9.2]{P1} для любого неприводимого $X\in Sm/k$ размерности $m$ соответствующий пучок Нисневича $F_{Nis}$ (являющийся, в частности, пучком Зарисского) имеет на $X_{Zar}$ вялую резольвенту 
$$
0\to F_{Nis}(-) \to\eta_{\ast}(F_{Nis}(\eta))\to \underset{x\in (-)^{(1)}}{\bigoplus}(H^{1}_{Nis})_{x}(-,F_{Nis})\to\dots
$$
$$
\dots\to  \underset{x\in (-)^{(m)}}{\bigoplus}(H^{m}_{Nis})_{x}(-,F_{Nis})\to 0\,
$$

Пусть теперь $G$ --- $(\ast)$-предпучок на $Sm/k$. Тогда в силу $\mathbb{A}^{1}$-инвариантности $G$ для любого $X\in Sm/k$ вложение $\tau:\mathbb{G}_{m}\hookrightarrow\mathbb{A}^{1}$ индуцирует гомоморфизм $\tau^{*}:G(X)\cong F(X\times\mathbb{A}^{1})\to G(X\times\mathbb{G}_{m})$. Мы будем обозначать предпучок $X\to G(X\times\mathbb{G}_{m})$ через $G^{\mathbb{G}_{m}}$, а соответствующий фактор-предпучок $X\to G^{\mathbb{G}_{m}}(X)/G(X)$ --- через $G_{-1}$. Получившиеся предпучки также удовлетворяют $(\ast)$. Если $G$ является пучком Нисневича, то пучком Нисневича также является и $G_{-1}$.
Предпучок $G_{-n}$ мы будем определять как результат применения к $G$ соответствующей операции $n$ раз. Лля любого $(\ast)$-предпучка и $n>0$ имеет место равенство $(G_{Nis})_{-n}=(G_{-n})_{Nis}$ (см. нашу статью, Теорема 4.1).

Следующая теорема позволяет проинтерпретировать представленную выше резольвенту в терминах пучков $(F_{Nis})_{-n}$:

\begin{theorem}\label{the:PTmain}~\cite[Теорема 6.4]{D}
Пусть $F$ --- $(\ast)$-предпучок, $X\in Sm/k$ --- $k$-гладкая неприводимая схема и $i_{x}:x \hookrightarrow X$ --- некоторая точка $X$ коразмерности $d$. Тогда для любого $n\geqslant 0$ имеют место следующие изоморфизмы:
$$
(H^{n}_{Nis})_{x}(X,F_{Nis})=
\begin{cases}

(F_{Nis})_{-d}(Spec(k(x))) & n=d\\
0 & n\neq d\\
\end{cases}  
$$     
\end{theorem}

В частности, представленная выше вялая резольвента приобретает вид
\begin{equation}\label{eq:Gerst}
0\to F_{Nis}\to\underset{x\in X^{(0)}}{\bigoplus}(i_{x})_{\ast}(F_{Nis})\to\dots\to\underset{x\in X^{(m)}}{\bigoplus}(i_{x})_{\ast}((F_{Nis})_{-m})\to 0
\end{equation}

При этом, уже доказанный нами случай $F_{Nis}=F_{Zar}$ позволяет использовать последовательность~\ref{eq:Gerst} как вялую резольвенту для пучка Зарисского $F_{Zar}$. Таким образом, для любого $n>0$ выполняется равенство $H^{n}_{Zar}(X,F_{Zar})=H^{n}_{Nis}(X,F_{Nis})$. Теорема~\ref{the:main} доказана.
\begin{remark} Отметим, что изначально в~\cite{D} теорема 6.4 была сформулирована в предположении, что поле $k$ имеет нулевую характеристику. Тем не менее, 
в актуальном доказательстве условие ненулевой характеристики бесконечного совершенного поля не использовалось за вычетом апелляции к теореме~\cite[Теорема 3.15(5)]{GP1}, которая исключает случай $char(k)=2$. В~\cite[Теорема 3.8]{DP} этот пробел устраняется, и таким образом, доказательство ~\cite[Теорема 6.4]{D} пословно воспроизводится для случая бесконечного совершенного поля $k$.
	\end{remark}

\end{document}